\newtheorem{theorem}{\sc Theorem}
\newtheorem{proposition}[theorem]{\sc Proposition}
\newcommand{\group}{\mathcal{G}}
\newcommand{\graph}{\mathbf{G}}
\newcommand{\Aut}{\text{Aut}}
\begin{document}
\title{Borel distinguishing number}
\author{Onur B\.{i}lge}
\address{Mutlukent Mah., Çankaya, Ankara}
\email{bilge.onur@metu.edu.tr}
\author{Burak KAYA}
\address{Department of Mathematics \\ Middle East Technical University,
 06800, Çankaya, Ankara, Turkey }
\email{burakk@metu.edu.tr}
\keywords{Descriptive graph combinatorics, distinguishing number, Borel distinguishing number}
\subjclass{03E15, 05C25}

\begin{abstract}
In this paper, we study definable variants of the notion of the distinguishing number of a graph in descriptive set theoretic setting. We introduce the notion of the Borel distinguishing number of a Borel graph and provide examples that separate distinguishing number and Borel distinguishing number at various levels. More specifically, we prove that there exist Borel graphs with countable distinguishing number but uncountable Borel distinguishing number and that, for every integer $n \geq 3$, there exists a Borel graph with distinguishing number $2$ whose Borel distinguishing number is finite and at least $n$.
\end{abstract}
\maketitle

\section{Distinguishing number and its definable variants}
Descriptive graph combinatorics is the study ``definable" graphs on Polish spaces that investigates graph theoretic properties such graphs under ``definable" constraints. For a comprehensive treatment of existing results and open questions, we refer the reader to \cite{KechrisMarks20}. So far, this study has mainly focused on chromatic numbers and perfect matchings of graphs. In this paper, we extend this study to investigate definable variants of the distinguishing number of a graph, a concept first introduced in \cite{AlbertsonCollins96}.

Let $\graph=(X,G)$ be a graph. Recall that a \textit{distinguishing coloring} of $\graph$ is a map $c: X \rightarrow I$ such that for every $\varphi \in \Aut(\graph)\setminus\{\text{id}\}$ there exists $x \in X$ such that $c(x) \neq c(\varphi(x))$. In other words, a distinguishing coloring of $\graph$ is a labeling of its vertices that results in a labeled graph with no nontrivial symmetries. Observe that a distinguishing coloring of a graph need not be a vertex coloring of the graph. The \textit{distinguishing number} of a graph $\graph=(X,G)$ is defined to be the cardinal number
\[ D(\graph)=\min\left\{|I|:\ \text{There exists a distinguishing coloring } c: X \rightarrow I\right\}\]
Suppose that $\graph=(X,G)$ is a Borel graph, i.e. $X$ is a standard Borel space and the edge relation $G \subseteq X \times X$ is a Borel subset of the product space. What would be the Borel analogue of distinguishing number?

Obviously, we first need to define the Borel analogue of a distinguishing coloring. There seem to be two natural variants depending on whether we would like to kill \textit{all symmetries} or only \textit{Borel symmetries} of the given graph.

For the first variant, we define a \textit{Borel distinguishing coloring} of $\graph$ to be a Borel map $c: X \rightarrow I$ such that for every $\varphi \in \Aut(\graph)\setminus\{\text{id}\}$ there exists $x \in X$ with $c(x) \neq c(\varphi(x))$, where $I$ is a standard Borel space.

For the second variant, we define a \textit{strictly Borel distinguishing coloring} of $\graph$ to be a Borel map $c: X \rightarrow I$ such that for every $\varphi \in \Aut_B(\graph)\setminus\{\text{id}\}$ there exists $x \in X$ with $c(x) \neq c(\varphi(x))$, where $I$ is a standard Borel space and $\Aut_B(\graph)$ is the group of Borel automorphisms of $\graph$.

Having defined these, we define the \textit{Borel distinguishing number} and the \textit{strictly Borel distinguishing number} of $\graph$ respectively to be the cardinal numbers
\begin{align*}
D_B(\graph)&=\min\left\{|I|:\ \text{There exists a Borel distinguishing } c: X \rightarrow I\right\}\\
D_{SB}(\graph)&=\min\left\{|I|:\ \text{There exists a strictly Borel distinguishing } c: X \rightarrow I\right\}
\end{align*}
It follows from the definitions that $D(\graph) \leq D_B(\graph)$ and $D_{SB}(\graph) \leq D_B(\graph)$. However, there is no relationship between $D(\graph)$ and $D_{SB}(\graph)$ holding in general. Indeed, we may have strict inequalities in both directions. For example, for a Borel graph $\graph$ with $\Aut_B(\graph)=\{\text{id}\}$ and $\Aut(\graph) \neq \{\text{id}\}$, we necessarily have $1=D_{SB}(\graph)< D(\graph)$. On the other hand, as we shall see later, it is also possible to have $D(\graph)<D_{SB}(\graph)$.

It turns out that for Borel graphs that are locally countable, i.e. the
degree of every vertex is at most countable, every strictly Borel distinguishing coloring is also a Borel distinguishing coloring and consequently, we have the following proposition.

\begin{proposition}\label{locallycountable} Let $\graph=(X,G)$ be a locally countable Borel graph. Then we have $D_{SB}(\graph) = D_B(\graph)$.
\end{proposition}

\begin{proof} Let $c: X \rightarrow I$ be a strictly Borel distinguishing coloring of $\graph$. We shall show that $c$ is also a Borel distinguishing coloring. Assume towards a contradiction that there exists $\varphi \in \Aut(\graph)\setminus\{\text{id}\}$ such that for all $x \in X$ with $c(x)=c(\varphi(x))$. As we have $\varphi \neq \text{id}$, there exists $w \in X$ with $\varphi(w) \neq w$. Set
\[S=\{\varphi^i(x): i \in \mathbb{Z},\ x \in [w]_{E_{\graph}}\}\]
where $E_{\graph}$ denotes the connectedness relation of $\graph$. Since $\graph$ is locally countable, $S$ is countable. Consider the map $\phi: X \rightarrow X$ defined by
\[\phi(x)=\begin{cases}
    x & \text{ if } x \in X \setminus S\\
    \varphi(x) & \text{ if } x \in S
\end{cases}\]
Since $S$ is a countable set that is invariant under $\varphi$, the map $\phi$ is a nontrivial Borel automorphism of $\graph$ such that $c(x)=c(\phi(x))$ for all $x \in X$, contradicting that $c$ is a strictly Borel distinguishing coloring. It now follows that $D_{SB}(\graph) \geq D_B(\graph)$ and hence $D_{SB}(\graph) = D_B(\graph)$.
\end{proof}

As a result of Proposition \ref{locallycountable}, we shall mostly be interested in $D_B(\graph)$ since most Borel graphs that naturally arise in mathematical practice are locally countable.

This paper intends to initiate the study of the Borel distinguishing number of Borel graphs. As a first step, we shall show that $D(\graph)$ and $D_B(\graph)$ can be separated at various levels. Before we conclude this section, we would like to remind the reader that, as a consequence of the Borel isomorphism theorem, we have that $D_B(\graph) \in \left\{1,2,3,\dots,\aleph_0,2^{\aleph_0}\right\}$.
\section{Some preliminaries}

In this section, we shall recall some basic definitions and results from the theory of Borel equivalence relations that will be used in the next sections. The reader who is not well-read in this topic is referred to \cite{Kanovei08} for a general treatment.

Let $X$ be a standard Borel space. An equivalence relation $E \subseteq X \times X$ on $X$ is called a \textit{Borel equivalence relation} if it is a Borel subset of $X \times X$. A Borel equivalence relation $E \subseteq X \times X$ is said to be \textit{countable} if all of its equivalence classes are countable. A countable Borel equivalence relation is said to be \textit{aperiodic} if all of its equivalence classes are infinite.

A Borel equivalence relation $E \subseteq X \times X$ is said to be \textit{smooth} if there exists some standard Borel space $Y$ and a Borel map $f: X \rightarrow Y$ such that
\[ xEy \text{ if and only if } f(x)=f(y)\]
for all $x,y \in X$. For a countable Borel equivalence relation, smoothness is equivalent to having a \textit{Borel transversal}, i.e. a Borel subset of the underlying space that intersects every equivalence class at a single point.

The Feldman-Moore theorem states that if $E \subseteq X \times X$ is a countable Borel equivalence relation, then one can find a countable discrete group $\group$ and a Borel action $\group \curvearrowright X$ such that $E$ is the orbit equivalence relation of the action $\group \curvearrowright X$. Moreover, one can find a sequence of involutions $\{g_n\}_{n \in \mathbb{N}} \subseteq \group$ such that $x E y \text{ if and only if there exists } n \in \mathbb{N}\ \ g_n \cdot x=y$, for all $x,y \in X$. For example, see \cite[Theorem 1.3]{KechrisMiller04}.

\section{A basic bound $D_B(\graph)$}

Let $\graph=(X,G)$ be a Borel graph that is locally countable. Consider the connectedness equivalence relation $E_{\graph}$ on $X$ given by
\[ x E_{\graph} y \text{ if and only if there exists a path from } x \text{ to } y\]
for all $x,y \in X$. Since $E_{\graph}$ is the union of projections of the countable Borel equivalence relations \[E_n=\{(x,y,w_1,\dots,w_n): x G w_1 G \dots G w_n G y\}\]
to the first two components, it follows from the Lusin-Novikov uniformization theorem that $E_{\graph} \subseteq X \times X$ is a countable Borel equivalence relation. We begin by noting the following result.

\begin{theorem}\label{mainproposition} Let $\graph=(X,G)$ be a locally countable Borel graph such that $E_{\graph}$ is smooth and aperiodic. Then $D_B(\graph)\leq \aleph_0$.
\end{theorem}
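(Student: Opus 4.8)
The plan is to construct directly a Borel distinguishing coloring $c : X \to I$ with $I$ countable. Since every Borel automorphism is an automorphism, such a $c$ is automatically strictly Borel distinguishing as well; hence, via the inequality $D_{SB}(\graph) \leq D_B(\graph)$ noted earlier, establishing $D_B(\graph) \leq \aleph_0$ delivers both conclusions at once. The two ingredients I would extract from the hypotheses are as follows. Since $E_{\graph}$ is smooth and countable, it admits a Borel transversal, so there is a Borel selector $s : X \to X$ with $s(x)\, E_{\graph}\, x$ and $s(x) = s(y)$ whenever $x\, E_{\graph}\, y$; thus $s$ canonically names the connected component of each vertex. Since $E_{\graph}$ is moreover aperiodic, every class is countably infinite, and a routine argument (Feldman--Moore enumeration of each class starting from its root, followed by a Borel rank compression) yields a Borel map $n : X \to \mathbb{N}$ restricting to a \emph{bijection} of each $E_{\graph}$-class onto $\mathbb{N}$; equivalently, $\graph$ is Borel isomorphic over its transversal to a graph whose components are the fibres $\{t\} \times \mathbb{N}$.

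Next I would fix a Borel injection $\nu : X \to \cantor$ that is constant on each $E_{\graph}$-class and separates distinct classes, obtained by composing the selector $s$ with a Borel embedding of the transversal into $\cantor$. The idea is to use the infinitely many vertices of each component, indexed by $n$, as slots into which the (a priori uncountably many) possible component names $\nu(x) \in \cantor$ are written one bit at a time. Concretely, I would define the Borel coloring
\[ c(x) = \bigl(\, n(x),\ \nu(x)\bigl(n(x)\bigr) \,\bigr) \in \mathbb{N} \times \{0,1\}, \]
which uses only countably many colors.

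It remains to verify that $c$ is distinguishing. Let $\varphi \in \Aut(\graph) \setminus \{\mathrm{id}\}$ and suppose toward a contradiction that $c(\varphi(x)) = c(x)$ for all $x$. Comparing first coordinates gives $n(\varphi(x)) = n(x)$ for every $x$. Because $\varphi$ preserves paths, it permutes the connected components; fix a component $C$ with name $\nu|_C \equiv a$ and let $\varphi(C)$ have name $b$. Since $n$ is a bijection onto $\mathbb{N}$ on every class and $\varphi$ preserves $n$, the vertex of index $k$ in $C$ is sent to the vertex of index $k$ in $\varphi(C)$; comparing second coordinates yields $a(k) = b(k)$ for all $k \in \mathbb{N}$, so $a = b$, and as $\nu$ separates classes we get $\varphi(C) = C$. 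Thus $\varphi$ fixes every component setwise, and since $n$ is injective on each component, $n(\varphi(x)) = n(x)$ forces $\varphi(x) = x$, whence $\varphi = \mathrm{id}$, a contradiction. I expect the main obstacle to be precisely the tension this coloring resolves: a Borel coloring has only countably many values, yet it must simultaneously destroy every internal symmetry of each component \emph{and} forbid any color-respecting isomorphism between the uncountably many, possibly mutually isomorphic, components. Aperiodicity is exactly what breaks this deadlock, for it supplies within each component infinitely many index slots across which the full component name can be spread using just one additional bit of color; without it (say, with uniformly finite components) the encoding would have no room to run.
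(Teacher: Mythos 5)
Your proposal is correct and follows essentially the same strategy as the paper: use smoothness to assign each $E_{\graph}$-class a name in an uncountable standard Borel space, use Feldman--Moore plus aperiodicity to enumerate each class in order type $\omega$, and then spread the class name across the countably many vertices of the class so that both the internal symmetries and the cross-component isomorphisms are destroyed. The only difference is cosmetic: the paper names components by distinct infinite subsets of $\mathbb{N}$ and colors the $k$-th vertex with the $k$-th element of that subset, whereas you color with the pair $(k,\ k\text{-th bit of the name})$ in $\mathbb{N}\times\{0,1\}$, which incidentally lets you avoid the paper's separate treatment of the case where $X$ is countable.
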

\begin{proof} The result is immediate if $X$ is countable, since we can color each element of $X$ to a different element of $\mathbb{N}$.

Suppose that $X$ is uncountable. Recall that the set $\mathcal{P}_{\text{inf}}(\mathbb{N})$ of infinite subsets of $\mathbb{N}$ is a $G_{\delta}$-subset of the Polish space $\mathcal{P}(\mathbb{N})$ and hence is a Polish space itself with its subspace topology. Since $E_{\graph}$ is smooth, the quotient space $X/E_{\graph}$ is an uncountable standard Borel space \cite[Proposition 6.3]{KechrisMiller04}. It now follows from the Borel isomorphism theorem that there exists a Borel isomorphism $f: X/E_{\graph} \rightarrow \mathcal{P}_{\text{inf}}(\mathbb{N})$.

Since $E_{\graph}$ is a smooth countable Borel equivalence relations, there exists a Borel transversal $T \subseteq X$ for $E_{\graph}$. Using the Feldman-Moore theorem, we obtain a Borel action $\group \curvearrowright X$ of a countable discrete $\group$ and a sequence of involutions $\{g_n\}_{n \in \mathbb{N}} \subseteq \group$ as described in Section 2. Consider the map $\varphi: X \rightarrow \mathbb{N}^+$ given by
\[ \varphi(x)=\begin{cases} \min\{n \in \mathbb{N}:\ g_n \cdot x \in T\}+2 & \text{ if } x \notin T\\ 1 & \text{ if } x \in T \end{cases}\]
Observe that $g_n$'s being involutions imply that $\varphi$ injectively assigns a positive natural number to each element on each connected component with the one picked by the transversal being the first element. Consider the Borel partial order relation $\preceq$ on $X$ given by
\[x \preceq y \text{ iff } x E_{\graph} y \text{ and }\varphi(x) \leq \varphi(y)\]
Since $\varphi$ is injective on each equivalence class and $E$ is aperiodic, the Borel relation $\preceq$ well-orders each equivalence class with order type $\omega$. Define the map $c: X \rightarrow \mathbb{N}$ by \[c(x)=\text{the } k_x^{\text{th}} \text{ element of the subset } f\left([x]_{E_{\graph}}\right)\]
where $k_x$ is such that $x$ is the $k_x$-th element of $[x]_{E_{\graph}}$ with respect to $\preceq$. It is straightforward but tedious to check that $c: X \rightarrow \mathbb{N}$ is indeed a Borel map.

Observe that the set of colors used in each connected component $[x]_{E_{\graph}}$ of $\graph$ are different from each other since $f$ is injective and every color in the set $f([x]_{E_{\graph}})$ is used for some vertex in $[x]_{E_{\graph}}$. Therefore no color preserving automorphism can move an element to a different connected component. On the other hand, since each vertex in each connected component gets a different color, no color preserving automorphism of $\graph$ can swap vertices in a single connected component. Hence $c: X \rightarrow \mathbb{N}$ is a Borel distinguishing coloring.
\end{proof}

By modifying the proof of Theorem \ref{mainproposition}, one can actually prove the following abstract general result, which we were unable to verify whether is known or not.

\begin{proposition}\label{locallykappa} Let $\kappa$ be an infinite cardinal and let $\graph=(X,G)$ be a graph of cardinality at most $2^{\kappa}$ such that each connected component is of size $\kappa$. Then $D(\graph)\leq \kappa$.
\end{proposition}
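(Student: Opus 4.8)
The plan is to imitate the coloring constructed in the proof of Theorem~\ref{mainproposition}, discarding the descriptive set theoretic bookkeeping (which is irrelevant here, since $D(\graph)$ imposes no definability constraint on the coloring) and replacing the role of $\mathbb{N}$ and its infinite subsets by $\kappa$ and its size-$\kappa$ subsets. Concretely, I would fix an index set $I$ with $|I|=\kappa$ and assign to each connected component $C$ of $\graph$ a ``palette'' $P_C\subseteq I$ with $|P_C|=\kappa$, arranged so that distinct components receive distinct palettes; I would then color the vertices of each $C$ by a bijection onto $P_C$. Since $|I|=\kappa$, such a coloring would witness $D(\graph)\leq\kappa$, provided the construction goes through.

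The first step is to bound the number of connected components: since each component has size $\kappa$ and $|X|\leq 2^{\kappa}$, and $\kappa$ is infinite, there are at most $2^{\kappa}$ components. The next step is to manufacture enough palettes. Rather than counting size-$\kappa$ subsets of $\kappa$ directly, whose exact number can be delicate to pin down in general, I would set $I=\kappa\times\{0,1\}$, so that $|I|=\kappa$, and for each $g\in\{0,1\}^{\kappa}$ consider $A_g=\{(\alpha,g(\alpha)):\alpha<\kappa\}$. Each $A_g$ has cardinality $\kappa$, and distinct $g$ yield distinct $A_g$, so $\{A_g:g\in\{0,1\}^{\kappa}\}$ is a family of $2^{\kappa}$ pairwise distinct size-$\kappa$ subsets of $I$. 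As there are at most $2^{\kappa}$ components, I can inject the set of components into this family, giving each $C$ its own palette $P_C$; fixing for each $C$ a bijection between $C$ and $P_C$ (which exists since both have size $\kappa$) then defines the coloring $c\colon X\to I$.

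Finally I would verify that $c$ is distinguishing. Any automorphism permutes the connected components, and if it preserves $c$ it must send each component to a component using exactly the same set of colors. Since $c$ restricted to $C$ is a \emph{bijection} onto $P_C$, the set of colors actually used on $C$ is precisely $P_C$, and these sets are pairwise distinct, so a color-preserving automorphism fixes each component setwise; within a single component the coloring is injective, so it must then fix every vertex. Hence the identity is the only color-preserving automorphism. The one point that requires genuine care beyond the argument already given for Theorem~\ref{mainproposition} is the cardinal arithmetic guaranteeing a supply of $2^{\kappa}$ distinct size-$\kappa$ palettes inside a set of size $\kappa$ (together with the fact that every palette color is genuinely used, which is why a bijection, and not merely an injection, is needed); the function-graph construction handles this cleanly, and the remainder is a routine adaptation of the distinguishing argument.
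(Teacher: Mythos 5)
Your proposal is correct and follows essentially the same route as the paper's (sketched) argument: assign to each of the at most $2^{\kappa}$ components a distinct size-$\kappa$ palette inside a color set of size $\kappa$ and color each component bijectively onto its palette. Your function-graph device for exhibiting $2^{\kappa}$ distinct size-$\kappa$ subsets, and your explicit check that bijectivity forces color-preserving automorphisms to fix each component, are just careful fillings-in of details the paper leaves implicit.
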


The idea, which the reader may already have extracted from the proof of Theorem \ref{mainproposition}, is as follows. There are at most $2^{\kappa}$ many connected components. Therefore, we can first choose a distinct subset of $\kappa$ of cardinality $\kappa$ for each connected component, each of which has size $\kappa$, and then color the vertices in each connected component to different colors in the relevant subset of $\kappa$. 

\section{Separating $D(\graph)$ and $D_B(\graph)$ at $\aleph_0$}

In this section, we shall provide a class examples of graphs for which we have $D(\graph) \leq \aleph_0 < D_B(\graph)$. From a graph theoretic perspective, our examples are trivial in the sense that they are all copies of disjoint union of continuum many complete graphs $K_{\mathbb{N}}$. However, due to how these complete graphs ``sit" on the underlying Polish space, we are able to separate the distinguishing number and the Borel distinguishing number. We are now ready to prove the main theorem of this section.

\begin{theorem}\label{separation} Let $E \subseteq X \times X$ be a nonsmooth aperiodic countable Borel equivalence relation on a standard Borel space $X$ and set $\graph=(X,E \setminus \Delta_X)$. Then $D(\graph) = \aleph_0 < 2^{\aleph_0} = D_B(\graph)$.
\end{theorem}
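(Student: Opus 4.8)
The plan is to read off all three quantities from the structure of $\graph$. Since the edge relation is $E\setminus\Delta_X$, two distinct vertices are adjacent exactly when they are $E$-equivalent, so the connected components of $\graph$ are precisely the $E$-classes; by aperiodicity each is a countably infinite complete graph, and $\Aut(\graph)$ consists of all bijections of $X$ carrying $E$-classes onto $E$-classes. I would first reduce the two Borel quantities to a single inequality: since $D_{SB}(\graph)\leq D_B(\graph)$ and, by the trichotomy recorded at the end of Section 1, both lie in $\{1,2,\dots,\aleph_0,2^{\aleph_0}\}$, it suffices to prove $D_{SB}(\graph)>\aleph_0$, which forces $D_{SB}(\graph)=2^{\aleph_0}$ and hence $D_B(\graph)=2^{\aleph_0}$ as well.

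For $D(\graph)=\aleph_0$ the upper bound is immediate from Proposition \ref{locallykappa} with $\kappa=\aleph_0$: we have $|X|\leq 2^{\aleph_0}$ and every component has size $\aleph_0$, so $D(\graph)\leq\aleph_0$. For the lower bound I would show that no finite coloring distinguishes: given $c\colon X\to I$ with $|I|=n<\aleph_0$, fix any (infinite) $E$-class, use the pigeonhole principle to find distinct $a,b$ in it with $c(a)=c(b)$, and note that the transposition swapping $a$ and $b$ while fixing everything else is a nontrivial element of $\Aut(\graph)$ preserving $c$. Hence $D(\graph)\geq\aleph_0$, giving equality.

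The heart of the proof is $D_{SB}(\graph)>\aleph_0$, and the guiding idea is to tie a countable strictly Borel distinguishing coloring to smoothness of $E$. Suppose toward a contradiction that $c\colon X\to\mathbb{N}$ is a strictly Borel distinguishing coloring. I would first argue that $c$ must be injective on each $E$-class: if some class contained distinct $a,b$ with $c(a)=c(b)$, then the transposition $\tau=(a\ b)$, equal to the identity off $\{a,b\}$, is a Borel bijection carrying the $E$-class of $a$ to itself and fixing every other class; thus $\tau\in\Aut_B(\graph)\setminus\{\text{id}\}$ and $\tau$ preserves $c$, contradicting that $c$ is strictly Borel distinguishing. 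So $c$ is injective on each class. I would then manufacture a Borel transversal from this: using Feldman--Moore generators $\{g_n\}_{n\in\mathbb{N}}$ to enumerate each class, the function $m(x)=\inf_n c(g_n\cdot x)$ is Borel, $E$-invariant, and (by injectivity) attained at a unique point of each class, so $T=\{x:\ c(x)=m(x)\}$ is a Borel transversal. This makes $E$ smooth, contradicting the hypothesis, and completes the argument that $D_{SB}(\graph)=D_B(\graph)=2^{\aleph_0}$.

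The step I expect to demand the most care is this final passage from ``no countable coloring kills all Borel symmetries'' to smoothness. The transposition observation reduces non-injectivity of $c$ to an honest Borel symmetry essentially for free, because a single two-point swap is automatically Borel; the real content is the equivalence between a Borel $\mathbb{N}$-valued coloring being injective on every class and the existence of a Borel transversal. I would isolate this as the key lemma and verify the Borelness of the minimum-color transversal via the countable enumeration of classes furnished by Feldman--Moore (equivalently, Lusin--Novikov).
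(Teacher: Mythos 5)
Your proposal is correct and follows essentially the same route as the paper: Proposition \ref{locallykappa} plus a pigeonhole/transposition argument for $D(\graph)=\aleph_0$, and then deriving a contradiction with nonsmoothness by showing that the set of least-colored (equivalently, minimum-attaining) points of a putative countable strictly Borel distinguishing coloring is a Borel transversal, using Feldman--Moore and the fact that a two-point swap inside a complete component is a Borel automorphism. The only cosmetic difference is that you first establish injectivity of $c$ on each class and then observe uniqueness of the minimizer, whereas the paper verifies directly that its set $T=\{x:\forall n\ c(x)\leq c(g_n\cdot x)\}$ meets each class in exactly one point.
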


\begin{proof} It follows from Proposition \ref{locallykappa} that $D(\graph) \leq \aleph_0$. Observe that each connected component is isomorphic to $K_{\mathbb{N}}$ and consequently we must have that $D(\graph) = \aleph_0$.

Assume towards a contradiction that $D_{B}(\graph) \leq \aleph_0$. Then there exists a Borel distinguishing coloring $c: X \rightarrow \mathbb{N}$ of $\graph$. Using the Feldman-Moore theorem, we can obtain a Borel action $\group \curvearrowright X$ of a countable discrete $\group=\{g_n\}_{n \in \mathbb{N}}$ for which the connectedness relation $E_{\graph}=E \setminus \Delta_X$ is the orbit equivalence relation of the action. Consider the set
\[T=\{x \in X: \forall n \in \mathbb{N}\ c(x) \leq c(g_n \cdot x)\}\]
that is, $T$ consists of vertices that take the least color under $c: X \rightarrow \mathbb{N}$ in their respective connected component. Since the action $\group \curvearrowright X$ and $c: X \rightarrow \mathbb{N}$ are Borel maps, we have that $T$ is Borel. We claim that $T$ is a Borel transversal.

It is immediate that $T$ intersects every equivalence class. If it were that there are distinct $x,y \in T$ with $x E_{\graph} y$, then the automorphism $\varphi: X \rightarrow X$ swapping the elements $x$ and $y$ fixing the rest of the elements would be a nontrivial Borel automorphism which we know does not exist. Hence $T$ is a Borel transversal which contradicts the nonsmoothness of $E$. Hence $D_{B}(\graph)=2^{\aleph_0}$
\end{proof}

Having Theorem \ref{mainproposition} and \ref{separation} in mind, one may be tempted to think that nonsmoothness of the connectedness relation may be an obstacle to having countable distinguishing number. As shown by the next example, this turns out to be false.

\textbf{Example.} Let $S^1=\{e^{i\theta}:\theta \in \mathbb{R}\}$ and consider the graph $\mathbf{S}=(S^1,G)$ where
\[G=\left\{\left(e^{i\theta},e^{i(\theta+1)}\right),\left(e^{i(\theta+1)},e^{i\theta}\right):\theta \in \mathbb{R}\right\}\]
Observe that $\mathbf{S}$ consists of continuum many disjoint ``lines" each point of which is connected to two points obtained by rotation via $1$ radian on the unit circle. Therefore any automorphism of $\mathbf{S}$ must swap these lines with each other, possibly after reversing orientation.

Set $B=\{e^{i\theta}: \theta \in (0,\pi/2) \cup (4\pi/6,5\pi/6)\}$. Then the characteristic function $\chi_B: S^1 \rightarrow \{0,1\}$ is a Borel map. Using the density of the orbit of each point under the relevant $\mathbb{Z}$ action, it is tedious but not difficult to check that $\chi_B$ is a Borel distinguishing coloring. Hence $D_{B}(\mathbf{S})=2$.

On the other hand, the connectedness relation $E_{\mathbf{S}}$ is not smooth because any transversal $T \subseteq S^1$ of the countable Borel equivalence relation $E_{\mathbf{S}}$ has to be non-Borel as the irrational rotation $\rho: e^{i \theta} \mapsto e^{i(\theta+1)}$ preserves the Lebesgue measure on $S^1$ and $S^1 = \bigsqcup_{k \in \mathbb{Z}} \rho^k(T)$.

\section{Separating $D(\graph)$ and $D_B(\graph)$ below $\aleph_0$}

In this section, we shall provide examples of graphs for which we have \[D(\graph)< D_B(\graph)<\aleph_0\]
For the rest of this section, fix an integer $n \geq 3$. Consider the Borel action $\mathbb{Z} \curvearrowright n^{\mathbb{Z}}$ by left-shift and let $X=\text{Free}\left(n^{\mathbb{Z}}\right)$ denote the free part of this action, that is, the set of nonperiodic sequences in $n^{\mathbb{Z}}$. Let $\graph=(X,G)$ be the Borel graph on $X$ defined by
\[ \alpha\ G\ \beta \text{ if and only if } \sigma(\alpha)=\beta \text{ or } \sigma(\beta) = \alpha\]
for all $\alpha,\beta \in X$, where $\sigma$ is the left-shift map on $X$. The main objective of this section is to show that $\graph$ is as desired.

\subsection{A lower bound}
Let $k \geq 2$ be an integer and let $f: X \rightarrow k$ be an arbitrary map. Consider the map $\text{tr}_f: X \rightarrow k^\mathbb{Z}$ given by
\[\text{tr}_f(\alpha)=\left(f\left(\sigma^i(\alpha)\right)\right)_{i \in \mathbb{Z}}\]
for all $\alpha \in X$. From now on, we shall call the sequence $\text{tr}_f(\alpha)$ the color trajectory{\footnote{We would like to note that the map $\text{tr}_f$ is called the \textit{symbolic representation} of $\alpha$ with respect to the partition $\{f^{-1}(i)\}_{0 \leq i<k}$ of $X$ in the context of ergodic theory.}} of $\alpha$ with respect to the map $f$. Clearly, $\text{tr}_f\circ \sigma=\sigma \circ \text{tr}_f$.

Given a sequence $\alpha$ indexed by $\mathbb{Z}$, let $\widehat{\alpha}$ denote the sequence obtained by reflecting $\alpha$ around the index $0$, that is, $\widehat{\alpha}(i)=\alpha(-i)$ for all $i \in \mathbb{Z}$. We have the following basic but useful proposition.

\begin{proposition}\label{colortrajectoryprop} Let $f: X \rightarrow k$ be a map. Then $f$ is a Borel distinguishing coloring of $\graph$ if and only if the color trajectory $\text{tr}_f$ is injective and there exists no $\delta \in k^{\mathbb{Z}}$ such that $\delta$ and $\widehat{\delta}$ are both in the image of $\text{tr}_f$.
\end{proposition}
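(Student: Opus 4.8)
The plan is to pass through the reformulation that $f$ is a (strictly) Borel distinguishing coloring precisely when $\graph$ admits no nontrivial (Borel) \emph{color-preserving} automorphism, and then to translate the existence of such an automorphism into statements about $\text{tr}_f$. First I would record the structure of $\Aut(\graph)$: since $\mathbb{Z} \curvearrowright n^{\mathbb{Z}}$ is free on $X=\text{Free}(n^{\mathbb{Z}})$, each connected component of $\graph$ is a bi-infinite path (a copy of the Cayley graph of $\mathbb{Z}$), so any automorphism permutes components and restricts on each to a graph isomorphism of bi-infinite paths, necessarily orientation-preserving or orientation-reversing. Concretely, if $\varphi \in \Aut(\graph)$, $\alpha \in X$, and $\gamma=\varphi(\alpha)$, then either $\varphi(\sigma^i\alpha)=\sigma^i\gamma$ for all $i$ or $\varphi(\sigma^i\alpha)=\sigma^{-i}\gamma$ for all $i$. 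Using $\text{tr}_f\circ\sigma=\sigma\circ\text{tr}_f$, the requirement that $\varphi$ preserve colors on the component of $\alpha$ becomes $\text{tr}_f(\gamma)=\text{tr}_f(\alpha)$ in the first case and $\text{tr}_f(\alpha)=\widehat{\text{tr}_f(\gamma)}$ in the second. These two identities are the bridge to condition (c). Throughout I take $f$ to be Borel, as this is implicit in (a) and (b).

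The implication (a) $\Rightarrow$ (b) is immediate from $\Aut_B(\graph)\subseteq\Aut(\graph)$. For (c) $\Rightarrow$ (a), I would let $\varphi$ be any color-preserving automorphism and show componentwise that $\varphi=\text{id}$. On the component of a given $\alpha$: if $\varphi$ is orientation-preserving, the bridge identity gives $\text{tr}_f(\varphi(\alpha))=\text{tr}_f(\alpha)$, so injectivity of $\text{tr}_f$ forces $\varphi(\alpha)=\alpha$ and hence $\varphi$ is the identity on that component; if $\varphi$ is orientation-reversing, the bridge identity gives $\text{tr}_f(\alpha)=\widehat{\text{tr}_f(\varphi(\alpha))}$, so setting $\delta=\text{tr}_f(\varphi(\alpha))$ both $\delta$ and $\widehat{\delta}$ lie in the image of $\text{tr}_f$, contradicting (c). As this holds on every component, $\varphi=\text{id}$, so $f$ is Borel distinguishing.

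For (b) $\Rightarrow$ (c) I would argue by contraposition, exhibiting a nontrivial Borel color-preserving automorphism whenever (c) fails. If $\text{tr}_f$ is not injective, pick distinct $\alpha,\gamma$ with $\text{tr}_f(\alpha)=\text{tr}_f(\gamma)$: if they share an orbit then that orbit's trajectory is periodic and translating that single orbit by the period preserves colors; if they lie in different orbits then swapping the two orbits orientation-preservingly preserves colors. If instead some $\delta$ has both $\delta$ and $\widehat{\delta}$ in the image, say $\delta=\text{tr}_f(\gamma)$ and $\widehat{\delta}=\text{tr}_f(\alpha)$, then the orientation-reversing map determined by $\sigma^i\alpha\mapsto\sigma^{-i}\gamma$ on the orbit(s) involved (a reflection of a single orbit when $\alpha,\gamma$ share one) preserves colors and is nontrivial. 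In each case the constructed automorphism is supported on one or two orbits and is the identity elsewhere.

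The main point, and what makes (b) equivalent to the purely combinatorial (c) rather than merely to (a), is the Borelness of these witnessing automorphisms. Because the action is free on $X$, every orbit is countable and hence a Borel subset of $X$; a bijection acting within finitely many such orbits and as the identity on their Borel complement is automatically Borel, and it carries no edge across the boundary since each such orbit is a full connected component of $\graph$. Thus each witness lies in $\Aut_B(\graph)\setminus\{\text{id}\}$, so (b) fails. Assembling (c) $\Rightarrow$ (a) $\Rightarrow$ (b) together with (b) $\Rightarrow$ (c) closes the cycle. The genuinely delicate step is the componentwise reduction in (c) $\Rightarrow$ (a): one must be certain that an arbitrary automorphism cannot mix orientations or components in a way that escapes the two bridge identities, which is exactly why the identification of the automorphism group of the bi-infinite path with the infinite dihedral group is needed.
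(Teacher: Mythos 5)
Your proposal is correct and follows essentially the same route as the paper: the trivial implication (a)~$\Rightarrow$~(b), the contrapositive of (b)~$\Rightarrow$~(c) via explicit orbit-swapping (or orbit-reflecting) Borel automorphisms, and (c)~$\Rightarrow$~(a) via the orientation-preserving/reversing dichotomy for automorphisms of bi-infinite paths together with the two ``bridge'' identities for $\text{tr}_f$. Your extra care about the case where the two witnesses lie in the same orbit (translation by the period, or reflection of a single orbit) and about the Borelness of the witnessing automorphisms only makes explicit what the paper leaves implicit.
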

\begin{proof} Suppose that $f: X \rightarrow k$ is a Borel distinguishing coloring. If it were the case that $\text{tr}_f(\alpha)=\text{tr}_f(\beta)$ for some $\alpha \neq \beta \in X$, then we could define a nontrivial Borel automorphism of the relevant labeled graph by swapping the orbits \[\dots,\sigma^{-1}(\alpha),\alpha,\sigma(\alpha),\dots \text{ and } \dots,\sigma^{-1}(\beta),\beta,\sigma(\beta),\dots\] sending $\sigma^i(\alpha)$ to $\sigma^i(\beta)$ for each $i \in \mathbb{Z}$. In a similar manner, if it were that $\delta$ and $\widehat{\delta}$ are both in the image of $\text{tr}_f$, say, $\delta=\text{tr}_f(\alpha)$ and $\widehat{\delta}=\text{tr}_f(\beta)$ for some $\alpha,\beta \in X$ and $\delta \in k^{\mathbb{Z}}$, then we could define a nontrivial Borel automorphism of the relevant labeled graph by swapping the orbits
\[\dots,\sigma^{-1}(\alpha),\alpha,\sigma(\alpha),\dots \text{ and } \dots,\sigma^{-1}(\beta),\beta,\sigma(\beta),\dots\] sending $\sigma^i(\alpha)$ to $\sigma^{-i}(\beta)$ for each $i \in \mathbb{Z}$.

Now, suppose that the color trajectory $\text{tr}_f$ is injective and there exists no $\delta \in k^{\mathbb{Z}}$ such that $\delta$ and $\widehat{\delta}$ are both in the image of $\text{tr}_f$. Let $\varphi \in \Aut(\graph)$ be such that $f(\alpha)=f(\varphi(\alpha))$ for all $\alpha \in X$. Let $\alpha \in X$. Since $\varphi$ is an automorphism of $\graph$, we have $\varphi(\sigma(\alpha))=\sigma^{\pm 1}(\varphi(\alpha))$. This clearly implies that either 
\begin{align*}
\varphi(\sigma^i(\alpha))&=\sigma^i(\varphi(\alpha)) \text{ for all } i \in \mathbb{Z}\\
&\text{or}\\
\varphi(\sigma^i(\alpha))&=\sigma^{-i}(\varphi(\alpha)) \text{ for all } i \in \mathbb{Z}.    
\end{align*}
In the first case, we have
\[\text{tr}_f(\alpha)(i)=f(\sigma^i(\alpha))=f(\varphi(\sigma^i(\alpha)))=f(\sigma^i(\varphi(\alpha)))=\text{tr}_f(\varphi(\alpha))(i)\]
for all $i \in \mathbb{Z}$. In the second case, we have
\[\text{tr}_f(\alpha)(i)=f(\sigma^i(\alpha))=f(\varphi(\sigma^i(\alpha)))=f(\sigma^{-i}(\varphi(\alpha)))=\text{tr}_f(\varphi(\alpha))(-i)\]
for all $i \in \mathbb{Z}$. Thus $\text{tr}_f(\alpha)=\text{tr}_f(\varphi(\alpha))$ or $\text{tr}_f(\alpha)=\widehat{\text{tr}_f(\varphi(\alpha))}$ for all $\alpha \in X$. Since the image of $\text{tr}_f$ cannot contain both $\text{tr}_f(\varphi(\alpha))$ and $\widehat{\text{tr}_f(\varphi(\alpha))}$, the second case is impossible. But then, since $\text{tr}_f$ is injective, the first case implies that $\varphi(\alpha)=\alpha$ for all $\alpha \in X$. Thus $\varphi=\text{id}$, completing the proof that $f$ is a Borel distinguishing coloring.
\end{proof}

Combined with basic results from the entropy theory of dynamical systems, Proposition \ref{colortrajectoryprop} allows us to conclude the following.

\begin{proposition}\label{finiteseparationlowerbound} $D_{B}(\graph) \geq n$.    
\end{proposition}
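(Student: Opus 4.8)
The plan is to combine the injectivity of the color trajectory map, guaranteed by Proposition \ref{colortrajectoryprop}, with the fact that Kolmogorov--Sinai entropy is an isomorphism invariant of measure-preserving systems. Since $D_{SB}(\graph)$ is automatically at least $n$ whenever it is infinite, it suffices to take a strictly Borel distinguishing coloring $f: X \rightarrow k$ with $k$ a finite integer and to show that $k \geq n$. First I would apply Proposition \ref{colortrajectoryprop}: because $f$ is a strictly Borel distinguishing coloring, the color trajectory $\text{tr}_f: X \rightarrow k^{\mathbb{Z}}$ is injective. As $f$ is Borel and $\sigma$ is a homeomorphism, $\text{tr}_f$ is a Borel injection, and we already know it intertwines the shift, $\text{tr}_f \circ \sigma = \sigma \circ \text{tr}_f$. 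By the Lusin--Souslin theorem, the image $Y=\text{tr}_f(X) \subseteq k^{\mathbb{Z}}$ is Borel and $\text{tr}_f: X \rightarrow Y$ is a Borel isomorphism; moreover $Y$ is shift-invariant, since $\sigma(Y)=\text{tr}_f(\sigma(X))=\text{tr}_f(X)=Y$.

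Next I would introduce a measure. Let $\mu$ be the uniform Bernoulli measure on $n^{\mathbb{Z}}$ assigning mass $1/n$ to each symbol. The collection of periodic sequences is $\mu$-null, so $X=\text{Free}(n^{\mathbb{Z}})$ is $\mu$-conull, and therefore the system obtained by restricting $\sigma$ and $\mu$ to $X$ is measure-theoretically identical to $(n^{\mathbb{Z}},\mu,\sigma)$, whose entropy is $h_\mu(\sigma)=\log n$. Pushing $\mu$ forward along $\text{tr}_f$ produces a shift-invariant probability measure $\nu=(\text{tr}_f)_*\mu$ on $k^{\mathbb{Z}}$, concentrated on $Y$; invariance follows from $\sigma_*\nu=(\sigma\circ\text{tr}_f)_*\mu=(\text{tr}_f\circ\sigma)_*\mu=\nu$. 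Since $\text{tr}_f$ is a shift-equivariant Borel isomorphism carrying $\mu$ to $\nu$, it is an isomorphism of measure-preserving systems, and hence entropy is transported: $h_\nu(\sigma)=h_\mu(\sigma)=\log n$.

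Finally I would invoke the elementary upper bound on the entropy of a subshift. Letting $\mathcal{P}$ be the partition of $k^{\mathbb{Z}}$ into the $k$ time-zero cylinders, $\mathcal{P}$ is generating for the shift, so $h_\nu(\sigma)\leq H_\nu(\mathcal{P})\leq \log k$. Combining this with the previous paragraph yields $\log n=h_\nu(\sigma)\leq \log k$, that is, $n \leq k$, which is exactly the desired lower bound.

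I expect the only delicate point to be the measure-theoretic bookkeeping rather than the entropy estimates themselves. Specifically, one must verify that the free part is $\mu$-conull and that an injective, shift-equivariant Borel map between standard Borel spaces genuinely yields an isomorphism of measure-preserving systems, so that the Kolmogorov--Sinai entropy passes across intact. Once this is secured, the two facts doing the real work---that the uniform Bernoulli measure on $n^{\mathbb{Z}}$ has entropy $\log n$, and that any shift-invariant measure on $k^{\mathbb{Z}}$ has entropy at most $\log k$---are entirely standard, and the conclusion $D_{SB}(\graph)\geq n$ follows at once.
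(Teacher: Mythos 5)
Your proof is correct and follows essentially the same route as the paper: both push the Bernoulli measure on $X$ forward along the injective equivariant map $\text{tr}_f$ and compare entropies, the only cosmetic difference being that the paper cites the variational principle for the bound $h_\nu(\sigma)\leq\log k$ while you use the Kolmogorov--Sinai generator argument. Your explicit treatment of the measure-theoretic bookkeeping (Lusin--Souslin, conullity of the free part, transport of entropy across the Borel isomorphism) fills in details the paper leaves implicit.
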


\begin{proof} Let $c: X \rightarrow k$ be a Borel distinguishing coloring. By Proposition \ref{colortrajectoryprop}, the map $\text{tr}_c: X \rightarrow k^{\mathbb{Z}}$ is an injective Borel map with $\text{tr}_c\circ \sigma=\sigma \circ \text{tr}_c$. It is well-known that, unless $n \leq k$, such a map cannot exist due to various entropy arguments. Since we could not find a suitable reference for this well-known fact, we will provide a quick proof. Consider $X$ together with its standard Bernoulli measure $\mu$. Then the push forward measure $\nu(A)=\text{tr}_c^*(\mu)(A)=\mu(\text{tr}_c^{-1}(A))$ is a shift-invariant Borel probability measure on $k^{\mathbb{Z}}$. By the variational principle, the entropy of the measure preserving dynamical system $\left(k^{\mathbb{Z}},\mathcal{B}(k^{\mathbb{Z}}),\nu,\sigma\right)$ cannot exceed the topological entropy of the Bernoulli shift $\mathbb{Z} \curvearrowright k^{\mathbb{Z}}$ \cite[Theorem 6.8.1]{Downarowicz11}. It follows that $\log n \leq \log k$ and hence $n \leq k$. Since any Borel distinguishing coloring uses at least $n$ colors, we have $D_{B}(\graph) \geq n$. 
\end{proof}

\subsection{An upper bound} In this subsection, we shall construct a Borel distinguishing coloring of $\graph$.

Let $k \geq n$ and $\varphi: X \rightarrow k^{\mathbb{Z}}$ be a Borel map such that $\varphi \circ \sigma = \sigma \circ \varphi$. Consider the Borel map $f_{\varphi}: X \rightarrow k$ given by $f_{\varphi}(\alpha)=\varphi(\alpha)(0)$. Then we clearly have $\text{tr}_{f_{\varphi}}=\varphi$ and hence, it follows from Proposition \ref{colortrajectoryprop} that $f_{\varphi}$ is a Borel distinguishing coloring provided that $\varphi$ is injective and there exists no $\delta \in k^{\mathbb{Z}}$ such that $\delta$ and $\widehat{\delta}$ are both in the image of $\varphi$. In this case, the image of $\varphi$ is a shift-invariant Borel subset of $k^{\mathbb{Z}}$ that is disjoint from its image under the measure preserving map $\cdot \mapsto \widehat{\cdot}$ and hence, must be of measure $0$ with respect to its usual Bernoulli measure by the ergodicity of the Bernoulli shift $\mathbb{Z} \curvearrowright k^{\mathbb{Z}}$.

Thus the problem of constructing a Borel distinguishing coloring of $\graph$ reduces to the problem of finding such an equivariant Borel injection. While we do not claim that it is optimal, it turns out that one can do this with $k=2n-1$ colors, which is sufficient for our purposes for this specific paper. We are now ready to present the main theorem of this section.

\begin{theorem}\label{finiteseparationmaintheorem} $D_B(\graph) \leq 2n-1$.
\end{theorem}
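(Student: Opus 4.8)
The plan is to use the reduction established just above: it suffices to produce a Borel map $g : X \to k$ with $k = 2n-1$ such that $\text{tr}_g$ is injective and no $\delta \in k^{\mathbb{Z}}$ has both $\delta$ and $\widehat{\delta}$ in the image of $\text{tr}_g$. I would encode two pieces of data at each coordinate. Writing $k = 2n-1 = n + (n-1)$, I identify the color set with $\{(s,\varepsilon) : s \in n,\ \varepsilon \in 2\} \setminus \{(n-1,1)\}$, thinking of $s$ as a recorded symbol and $\varepsilon$ as an orientation bit that is available for every symbol except $s = n-1$. Concretely $g(\gamma) = (\gamma(0), \beta(\gamma))$, where $\beta : X \to 2$ is a Borel bit with $\beta(\gamma) = 0$ whenever $\gamma(0) = n-1$. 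Since the first coordinate of $g$ records $\gamma(0)$, the trajectory $\text{tr}_g(\alpha)$ always determines $\alpha$, so $\text{tr}_g$ is injective regardless of $\beta$, and $g$ is manifestly shift-equivariant. Using Proposition~\ref{colortrajectoryprop} exactly as in the computation of $(b)\Rightarrow(c)$, matching the recorded symbols forces any reflection coincidence $\text{tr}_g(\alpha) = \widehat{\text{tr}_g(\beta)}$ to have $\beta = \widehat{\alpha}$, after which the colors agree iff $g(\gamma) = g(\widehat{\gamma})$ for every $\gamma$ in the orbit of $\alpha$. As the symbols of $\gamma$ and $\widehat{\gamma}$ always coincide, this reduces the whole problem to a single requirement: for every orbit there should exist a bit-capable $\gamma$ (i.e.\ $\gamma(0) \neq n-1$) with $\beta(\gamma) \neq \beta(\widehat{\gamma})$.

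The key step is the choice of $\beta$. Let me set $\beta(\gamma) = 1$ precisely when the right tail $(\gamma(1),\gamma(2),\dots)$ is lexicographically larger than the left tail $(\gamma(-1),\gamma(-2),\dots)$, and $\beta(\gamma) = 0$ otherwise; this is plainly Borel, and I override it to $0$ on the Borel set $\{\gamma(0) = n-1\}$. Because reflection interchanges the two tails, $\beta(\widehat{\gamma}) = 1$ iff the left tail exceeds the right tail; hence $\beta(\gamma) \neq \beta(\widehat{\gamma})$ whenever the two tails differ, that is, whenever $\gamma$ fails to be a palindrome about the origin. Now I would use nonperiodicity: if $\alpha$ were a palindrome about two distinct integer centers, composing the two reflections would exhibit a nontrivial period, so each orbit contains at most one ``palindromic'' point $\gamma$ with equal tails. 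On the other hand a nonperiodic $\alpha$ cannot be constantly equal to $n-1$, and if it had a single coordinate different from $n-1$ it would be a palindrome about that coordinate; so every orbit that is not of this exceptional form has at least two coordinates with symbol $\neq n-1$, whence a bit-capable point that is not the palindromic point. At that point the tails differ and $\beta(\gamma) \neq \beta(\widehat{\gamma})$, as required.

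The one place the argument genuinely breaks is the family of exceptional orbits, namely those single-defect sequences having exactly one coordinate $v \neq n-1$ and all other coordinates equal to $n-1$. Such an orbit is a palindrome whose only bit-capable point is its palindromic center, and a short argument shows that no symbol-plus-one-bit coloring can distinguish it from its reflection; this is exactly why the count $2n-1$ leaves no room here. Fortunately there are only $n-1$ such orbits, one for each $v \in \{0,\dots,n-2\}$, each a single countable orbit and hence Borel, so I would simply redefine $g$ on their union by hand, sending them injectively to fixed nonperiodic, non-reflection-symmetric trajectories chosen disjoint (together with their reflections) from the image of the main coloring. I expect the main obstacle to be precisely this bookkeeping, together with the verification that the lexicographic bit does the job on every remaining orbit; the tail-comparison itself is the crucial idea, since it breaks the reflection symmetry pointwise and Borel-uniformly without ever having to choose an orientation or a representative of an orbit, which is impossible here by nonsmoothness.
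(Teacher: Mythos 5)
Your proposal is correct and follows essentially the same route as the paper's proof: a color of the form (symbol, orientation bit), with the bit obtained by Borel-comparing a point with its reflection, one designated symbol lacking the bit so that only $2n-1$ colors are used, and the $n-1$ single-defect orbits (which are palindromic exactly at their only bit-capable point) recolored by hand via step-function trajectories. The only cosmetic differences are that the paper withholds the bit from the symbol $0$ rather than $n-1$ and compares $\alpha$ with $\widehat{\alpha}$ under an abstract Borel linear order rather than by lexicographic tail comparison.
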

\begin{proof} For each $1 \leq m \leq n$, let $\gamma_m \in X$ be the sequence given by
\[\gamma_m(j)=\begin{cases}
    m & \text{ if } j=0\\
    0 & \text{ if } j \neq 0\end{cases}\]
Set $W=\{\sigma^{i}(\gamma_m):\ i \in \mathbb{Z},\ 1 \leq m \leq n\}$ and
\[Y= \bigg(\{(0,\uparrow)\} \cup \big(\{1,\dots,n-1\} \times\{\uparrow,\downarrow\}\big)\bigg)^{\mathbb{Z}}\]
Since $X$ is an uncountable standard Borel space, we can fix a Borel linear ordering $<$ of $X$. Let $\varphi_1: X \setminus W \rightarrow Y$ be the Borel map given by
\[
\varphi_1(\alpha)=(\alpha(i),\text{spin}(\sigma^i(\alpha)))_{i \in \mathbb{Z}}
\]
where $\text{spin}: X \setminus W \rightarrow \{\uparrow,\downarrow\}$ is the Borel map given by
\[\text{spin}(\alpha)=\begin{cases}
\uparrow & \text{ if } \alpha(0)=0\\
\uparrow & \text{ if } \alpha(0) \neq 0 \text{ and } \alpha \geq \widehat{\alpha}\\
\downarrow & \text{ if } \alpha(0) \neq 0 \text{ and } \alpha<\widehat{\alpha}
\end{cases}\]
Let
$\varphi_2: W \rightarrow Y$ be the Borel map given by
\[\varphi_2(\sigma^i(\gamma_m))(j)=\begin{cases}
(m,\uparrow) & \text { if } i+j\geq 0\\
(m,\downarrow) & \text { if } i+j< 0
\end{cases}\]
Consider the Borel map $\varphi: X \rightarrow Y$ defined by
\[\varphi(\alpha)=\begin{cases}
    \varphi_1(\alpha) & \text{ if } \alpha \notin W\\
    \varphi_2(\alpha) & \text{ if } \alpha \in W
\end{cases}\]
It is not difficult to check that $\varphi \circ \sigma=\sigma \circ \varphi$. We claim that $\varphi$ is injective and that there exists no $\delta \in Y$ such that $\delta$ and $\widehat{\delta}$ are both in the image of $\varphi$.

Observe that the first components of the entries of sequences in the images of $\varphi_1$ and $\varphi_2$ give nonperiodic and constant sequences respectively. Consequently, in order to check the injectivity of $\varphi$, it is enough to check that $\varphi_1$ and $\varphi_2$ are injective maps. The injectivity of $\varphi_1$ follows from the injectivity of $\pi_1 \circ \varphi_1= \text{id}_{X\setminus W}$. To check the injectivity of $\varphi_2$, suppose that $\varphi_2(\sigma^i(\gamma_m))=\varphi_2(\sigma^{i'}(\gamma_{m'}))$. Then, by comparing the first components of the entries of the relevant sequences, we obtain that $m=m'$. But then, by comparing the first index at which the second components of entries of these sequences flip from $\downarrow$ to $\uparrow$ as $j$ varies from $-\infty$ to $+\infty$, we get that $i=i'$. Hence $\varphi_2$ injective. It follows that $\varphi$ injective.

That there is no $\delta \in Y$ such that $\delta$ and $\widehat{\delta}$ are both in the image of $\varphi_2$ is clear as the second components of entries of a sequence in this image are eventually $\downarrow$ and $\uparrow$ respectively, as $j$ approaches $-\infty$ and $+\infty$ respectively. Assume towards a contradiction that $\delta=\varphi_1(\alpha)$ and $\widehat{\delta}=\varphi_1(\alpha')$ for some $\alpha,\alpha' \in X \setminus W$ and $\delta \in Y$. Then, since $\pi_1 \circ \varphi_1 = \text{id}_{X \setminus W}$, we must have $\alpha'=\widehat{\alpha}$. Consequently, we have \[\widehat{\varphi_1(\alpha)}=\varphi_1(\widehat{\alpha})\]
As $\alpha \in X \setminus W$, there exists distinct $i,j \in \mathbb{Z}$ such that $\alpha(i) \neq 0$ and $\alpha(j) \neq 0$. Because $\alpha$ is not periodic, we cannot have $\sigma^i(\alpha)=\widehat{\sigma^i(\alpha)}$ and $\sigma^j(\alpha)=\widehat{\sigma^j(\alpha)}$ simultaneously. Suppose without loss of generality that $\sigma^i(\alpha) \neq \widehat{\sigma^i(\alpha)}$. Then we necessarily have
\begin{align*}
    \text{spin}(\sigma^i(\alpha)) &\neq \text{spin}(\widehat{\sigma^i(\alpha)})\\
        \text{spin}(\sigma^i(\alpha)) &\neq \text{spin}(\sigma^{-i}(\widehat{\alpha}))\\
    \pi_2\left(\varphi_1(\alpha)(i)\right) &\neq \pi_2\left(\varphi_1(\widehat{\alpha})(-i)\right)
\end{align*}
from which follows $\widehat{\varphi_1(\alpha)} \neq \varphi_1(\widehat{\alpha})$, a contradiction.

Therefore there exists no $\delta \in Y$ such that $\delta$ and $\widehat{\delta}$ are both in the image of the equivariant Borel injection $\varphi$. By the remarks at the beginning of this subsection and Proposition \ref{colortrajectoryprop}, the map $f_{\varphi}$ is a Borel distinguishing coloring with $2n-1$ colors.
\end{proof}

Let us now find the distinguishing number of $\graph$. Since $\graph$ is a disjoint union of continuum many ``lines" as an abstract graph, the distinguishing coloring in the last example of the previous section can be transferred via an appropriate bijection so that we have $D(\graph)=2$. Combining all these results, we obtain the following theorem.

\begin{theorem} For every $n \geq 3$, there exists a Borel graph $\graph$ such that
\[2=D(\graph)<n \leq D_B(\graph) < \aleph_0\]    
\end{theorem}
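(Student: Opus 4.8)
The plan is to collate the bounds already established in this section for the graph $\graph$ fixed at the start of Section 5, since the statement is precisely the conjunction of the estimates proved piece by piece. First I would record the chain of relations I need to assemble: $D(\graph) = 2$, then $n \leq D_{SB}(\graph)$, then $D_{SB}(\graph) = D_B(\graph)$, and finally $D_B(\graph) < \aleph_0$. Three of these four are immediate citations: the lower bound $n \leq D_{SB}(\graph)$ is exactly Proposition \ref{finiteseparationlowerbound}; the equality $D_{SB}(\graph) = D_B(\graph)$ is the remark following Proposition \ref{colortrajectoryprop}; and Theorem \ref{finiteseparationmaintheorem} gives $D_B(\graph) \leq 2n-1$, which also yields $D_B(\graph) < \aleph_0$ since $2n-1$ is finite. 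Putting these together produces the entire right-hand portion $n \leq D_{SB}(\graph) = D_B(\graph) < \aleph_0$.

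The only part demanding a genuine argument is $D(\graph) = 2$. For the upper bound I would observe that, as an abstract graph, each connected component of $\graph$ is a single $\sigma$-orbit, and since $X$ consists of nonperiodic sequences every such orbit is isomorphic to the two-way infinite path on $\mathbb{Z}$; thus $\graph$ is a disjoint union of continuum many copies of $\mathbb{Z}$-lines. This is exactly the abstract isomorphism type of the graph $\mathbf{S}$ from the example in the previous section. Fixing a graph isomorphism $\mathbf{S} \to \graph$ (which exists because both are disjoint unions of continuum many $\mathbb{Z}$-lines, hence have the same number of components of each isomorphism type), I would transfer the distinguishing $2$-coloring furnished there to $\graph$, giving $D(\graph) \leq 2$. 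For the lower bound I would note that a single $\mathbb{Z}$-line cannot be distinguished with one color, since the reflection about any vertex is then a nontrivial color-preserving automorphism; hence $D(\graph) \geq 2$.

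I do not anticipate a serious obstacle, as the theorem is essentially a summary of the section. If anything, the subtle point is the bookkeeping in the transfer argument for $D(\graph) \leq 2$: one must ensure the chosen isomorphism genuinely respects the component structure, so that the pulled-back coloring kills every abstract automorphism of $\graph$ rather than merely those fixing each component setwise. Since the distinguishing number is an isomorphism invariant of the abstract graph, however, this reduces to the observation $D(\graph) = D(\mathbf{S})$, where $D(\mathbf{S}) = 2$ follows from $D(\mathbf{S}) \leq D_B(\mathbf{S}) = 2$ (computed in the cited example) together with the lower bound just noted. Chaining all the displayed relations then yields $2 = D(\graph) < n \leq D_{SB}(\graph) = D_B(\graph) < \aleph_0$, as required.
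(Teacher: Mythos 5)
Your proposal is correct and follows the paper's own route exactly: it assembles Proposition \ref{finiteseparationlowerbound}, the remark that $D_{SB}(\graph)=D_B(\graph)$, and Theorem \ref{finiteseparationmaintheorem}, and obtains $D(\graph)=2$ by transferring the distinguishing $2$-coloring of $\mathbf{S}$ along an abstract isomorphism between the two disjoint unions of continuum many $\mathbb{Z}$-lines. Your explicit lower-bound remark (a one-colored line admits a reflection) is a small addition the paper leaves implicit, but the argument is the same.
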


\section{Conclusion and further questions}

In this paper, we have given examples of Borel graphs $\graph$ for which we have separated the distinguishing number and the Borel distinguishing number at the levels
\begin{center}
\begin{tabular}{|c c |} 
 \hline
 $D(\graph)$ & $D_B(\graph)$ \\ [0.5ex] 
 \hline\hline
 countable & uncountable \\
 finite & countable\footnotemark \\
 finite & finite \\ [0.5ex] 
 \hline
\end{tabular}
\end{center}
\footnotetext{Although we have not done this explicitly, this example can easily be achieved by letting $n=\omega$ in the example of Section 5.}
Currently, we do not have an example of a Borel graph for which $D(\graph)$ is finite but $D_B(\graph)$ is uncountable. We strongly suspect that the Borel graph obtained from the free part of the shift action $\mathbb{F}_2 \curvearrowright 2^{\mathbb{F}_2}$ of the free group $\mathbb{F}_2$ on two generators provides such an example. Our suspicion is mainly due to the fact that, in order to kill all ``reflections" of the Cayley graph of $\mathbb{F}_2$, we need to mark infinitely many independent directions, contrary to the case with $\mathbb{Z}$, for which marking two directions with a spin function was sufficient in the proof of Theorem \ref{finiteseparationmaintheorem}.

We would like to pose a more general question. Let $\mathcal{G}$ be a countable discrete group with a symmetric generating set $\mathcal{S}$ with $1_{\mathcal{G}} \notin \mathcal{S}$ and let $n \geq 2$ be an integer. Consider the shift action $\mathcal{G} \curvearrowright n^{\mathcal{G}}$ given by
\[(g \cdot \alpha)(h)=\alpha\left(g^{-1}h\right)\]
Let $\text{Free}(\mathcal{G})$ denote the free part of this action. Consider the Borel graph $\text{Shift}(\mathcal{G},\mathcal{S})=(\text{Free}(\mathcal{G}),G)$ given by
\[\alpha\ G\ \beta \text{ if and only if there exists } g \in \mathcal{S} \text{ such that } g \cdot \alpha = \beta\]
\textbf{Question.} What are the values of $D_B(\text{Shift}(\mathcal{G},\mathcal{S}))$ for various countable discrete groups $\mathcal{G}$? In particular, what are the values of $D_B(\text{Shift}(\mathbb{Z},\{\pm 1\}))$ and $D_B(\text{Shift}(\mathbb{F}_2,\{a^{\pm 1},b^{\pm 1}\}))$?\\
 
\textbf{Acknowledgments.} Part of this research was done when the first author was a graduate student at Leipzig University. The first author would like to thank Lukasz Grabowski, Zoltán Vidnyánszky and Anush Tserunyan for many fruitful discussions and, especially, for pointing out the relationship between entropy and the Borel distinguishing number of shift graphs, providing a quick proof of Proposition \ref{finiteseparationlowerbound}. The second author would like to thank Tan Özalp for comments and suggestions on an earlier version of this manuscript. Some of the results in Section 3 and Section 4 were first announced by the second author at the 35$^{\text{th}}$ National Mathematics Symposium organized in Edirne, Turkey. Some of the results in Section 5 were presented by the second author at the 37$^{\text{th}}$ National Mathematics Symposium organized in Antalya, Turkey.

\bibliography{references}{}
\bibliographystyle{alpha}

\end{document}